\newcommand{\ds}{\displaystyle}
\newcommand{\vip}{\vskip0.15cm}
\newcommand{\ala}{\nonumber \\}
\newcommand{\indiq}{{\bf 1}}
\newcommand{\e}{{\varepsilon}}
\newcommand{\be}{{\bf e}}
\newcommand{\dtz}{{D_{\theta_0}}}
\newcommand{\cF}{{{\mathcal F}}}
\newcommand{\cA}{{{\mathcal A}}}
\newcommand{\rr}{{\mathbb{R}}}
\newcommand{\rd}{{\mathbb{R}^2}}
\newcommand{\rp}{{\mathbb{R}_+}}
\newcommand{\intot}{\ds\int_0^t}
\newcommand{\intrd}{\ds\int_{\rd} \!\!\!}
\newcommand{\intrp}{\ds\int_0^\infty \!\!\!}
\newcommand{\intpp}{\ds\int_{-\pi}^\pi \!}
\newcommand{\intpdp}{\ds\int_{-\pi/2}^{\pi/2} \!}
\newcommand{\cosa}{{\cos\alpha}}
\newcommand{\sina}{{\sin\alpha}}
\newcommand{\cost}{{\cos\theta}}
\newcommand{\sint}{{\sin\theta}}
\newtheorem{theo}{\indent Theorem}
\newtheorem{prop}[theo]{\indent Proposition}
\newtheorem{lem}[theo]{\indent Lemma}
\newtheorem{defin}[theo]{\indent Definition}
\begin{document}

\title{A new regularization possibility for the 
Boltzmann equation with soft potentials}

\author{Nicolas Fournier$^1$}

\maketitle

\footnotetext[1]{Centre de Math\'ematiques,
Facult\'e de Sciences et Technologies,
Universit\'e Paris~12, 61, avenue du G\'en\'eral de Gaulle, 94010 Cr\'eteil 
Cedex, France, {\tt nicolas.fournier@univ-paris12.fr}}

\begin{abstract}
\noindent We consider a simplified Boltzmann equation: spatially homogeneous, 
two-dimensional, radially symmetric, with Grad's angular cutoff, and
linearized around its initial condition.
We prove that for a sufficiently singular velocity cross section,
the solution may become instantaneously a function,
even if the initial condition is a singular measure.
To our knowledge, this is the first regularization result 
in the case with 
cutoff: all the previous results were relying on the non-integrability 
of the angular cross section.
Furthermore, our result is quite surprising: the regularization 
occurs for initial conditions that are not too singular, 
but also not too regular.
The objective of the present work is to explain that
the singularity of the velocity cross section, which is often considered
as a (technical) obstacle to regularization, seems on the contrary to 
help the regularization.
\end{abstract}

\maketitle

\textbf{MSC 2000}: 76P05, 82C40.

\textbf{Keywords}: Boltzmann equation, regularization, soft potentials,
Grad's cutoff.


\section{Introduction} 

Let $f_t(dv)$ be the velocity distribution in a 2d spatially homogeneous 
dilute gas at time $t\geq 0$. Then under some physical assumptions,
$f$ solves the Boltzmann equation: for some $\gamma \in (-2,1]$, some
angular cross section $\beta$ (a nonnegative symmetric measure on 
$(-\pi,\pi)\backslash\{0\}$),
for all sufficiently regular functions $\varphi:\rd \mapsto\rr$,
\begin{equation}\label{wbe}
\frac{d}{dt} \intrd f_t(dv) \varphi(v) = \intrd f_t(dv)\intrd f_t(dv_*)
|v-v_*|^\gamma \intpp \beta(d\theta) [\varphi(v')-\varphi(v)],
\end{equation}
where, for $R_\theta$ the rotation of angle $\theta$,
\begin{equation}\label{dfvprime}
v'=v'(v,v_*,\theta)=\frac{v+v_*}{2} + R_\theta \frac{v-v_*}{2}.
\end{equation}
We refer to Villani \cite{villani} and Desvillettes \cite{desvillettes} 
for reviews 
on this equation. When $\gamma<0$, we speak of soft potentials.
The suject of the present paper is regularization: can 
$f_t$ be more regular than $f_0$, as soon as $t>0$~?

There are many results on this topic: it has been proved by several authors
that such a phenomenon occurs in the case without cutoff, that is
when $\beta(d\theta)=\beta(\theta)d\theta$ 
is sufficiently non-integrable near $\theta\sim 0$,
say $\beta(\theta) \sim \theta^{-1-\nu}$, with $\nu\in (0,2)$.
There has been essentially three types of results:

$\bullet$ it is shown in Desvillettes-Wennberg \cite{dw}
that in any dimension, when $\gamma\in [0,1)$, $\nu\in(0,2)$, 
and for an initial condition with finite mass, energy, entropy,
for a smoothed interaction kernel (i.e. when $|v-v_*|^\gamma$
is replaced by something like $(1+|v-v_*|^2)^{\gamma/2}$), 
then $f_t\in C^\infty$ for all $t>0$.
See also the (older) papers by Desvillettes \cite{d95}
and Alexandre-El Safadi \cite{al};

$\bullet$ in the case of Maxwellian molecules, i.e.
when $\gamma=0$, $\nu\in(0,2)$, 
Graham-M\'el\'eard \cite{gm} (see also \cite{f}) 
have proved that $f_t\in C^\infty$ for all $t>0$, 
even when the initial condition is a singular measure (but with many moments).
This works only in dimension $1$ or $2$.

$\bullet$ finally, the most general result (but also weaker) is the one of
Alexandre-Desvillettes-Villani-Wennberg \cite{advw}: in any dimension,
for any $\gamma$, any $\nu\in(0,2)$, any initial condition with finite
mass, energy, entropy, $\sqrt{f_t}$ instantaneously belongs to 
$H^{\nu/2}_{loc}$.
Let us observe that when tracking the constants in \cite{advw},
we realize that 
the result is weaker and weaker when $\gamma$ becomes more and more negative.
These results are based on lowerbounds of the entropy dissipation functionnal.
Such an idea was initiated by Lions \cite{l}, see also Villani \cite{vilreg}.

\vip

Our goal in the present paper is to show that  the explosion
of $|v-v_*|^\gamma$ near $v=v_*$, when $\gamma<0$, is not
an obstacle to regularization: ont the contrary, even in the case with cutoff,
such a singular interaction kernel may provide some regularization.

\vip

We start with a simplified equation, namely we linearize the Boltzmann equation
around its initial condition. We show that for a specific singular 
initial condition
(an uniform distribution on a circle), the solution instantaneously
becomes a function, in the case with cutoff, even if the angular cross
section is not a function.

\vip
The present result is
completely new, since all the previous results were
relying on the explosion near $0$ of the 
angular cross section $\beta$. 
It is furthermore 
very surprising, since, as we will show, no regularization may
hold if $f_0$ is too singular {\bf nor too regular}: if $f_0$
is a measure that is not a function but is too smooth in some sense,
then it will never become a function. This kind of phenomenon is fully
nonlinear. 

However, our result is very weak,
since it is qualitative (we
only prove that the solution immediately becomes a function), and
since we consider a simplified equation.

\vip 

Regularization is motivated by many other subjects
for which regularity estimates are needed: convergence
to equilibrium, uniqueness,... For example,
it is shown in \cite{fg} that uniqueness holds (with $\gamma<0$ and
a possibly non-cutoff angular cross-section)
for sufficiently smooth solutions (in some 
$L^p$, with $p$ large enough). We are far from such a 
quantitative regularization.

\vip

Let us finally mention that a similar result
should hold for the (nonlinear) Boltzmann equation.

\vip

{\bf Formal result.}
Assume that $\Lambda:=\beta((-\pi,\pi)\backslash\{0\})\in(0,\infty)$, and that
$\gamma\in (-2,-1)$. Consider $f_0$ the uniform law on the circle $\{|v|=1\}$.
Then for
any solution $(f_t)_{t\geq 0}$ to (\ref{wbe}), $f_t$ is a function
for a.e. $t>0$. 
(In dimension 3, take also $f_0$ uniform on $\{|v|=1\}$ 
but $\gamma\in(-3,-2)$).

\vip

{\it Formal proof.} We assume here that one may apply 
(\ref{wbe}) with any bounded measurable $\varphi$. Since
$f_0$ is a radially symmetric probability measure, 
so is $f_t$ for all $t\geq 0$.

\vip

{\it Step 1.} First, applying (\ref{wbe}) with $\varphi=\indiq_{\{0\}}$,
one easily deduces that $f_t(\{0\})=0$ for all $t\geq 0$. Indeed,
$f_0(\{0\})=0$, and simple considerations using that $f_t$ is radially symetric
show that for all $v$, all $\theta\ne 0$, 
$\indiq_{\{0\}}(v')-\indiq_{\{0\}}(v) \leq 0$ for $f_t$-a.e. $v_*$.

\vip

{\it Step 2.} Next, for any Lebesgue-null $A\subset \rd$, one gets
convinced that for all $\theta\ne 0$, $\indiq_A(v')=0$ for 
$f_t(dv)f_t(dv_*)$-a.e. $v,v_*$. Here, one has to use that $f_t$ is radially
symmetric and does not give weight to $0$.
As a consequence, 
\begin{equation}\label{forfor}
f_t(A) + \Lambda 
\intot ds \intrd f_s(dv)\intrd f_s(dv_*) |v-v_*|^\gamma\indiq_A(v)
\leq f_0(A).
\end{equation}
Thus if $f_0(A)=0$, we deduce that $f_t(A)=0$ for all $t\geq 0$.
This implies that $f_t(dv)$ has a density, except maybe on $C=\{|v|=1\}$. 

\vip

{\it Step 3.} It remains to check that $f_t(C)=0$ for a.e. $t>0$. But
(\ref{forfor}) applied with $C$ implies that 
\begin{equation*}
\int_0^\infty ds \intrd f_s(dv)\intrd f_s(dv_*) |v-v_*|^\gamma\indiq_C(v)
\leq \frac{f_0(C)}{\Lambda}=\frac{1}{\Lambda}.
\end{equation*}
As a consequence, for a.e. $t>0$, 
$\int\!\!\int_{C\times C} f_t(dv)f_t(dv_*) |v-v_*|^{\gamma}<\infty$.
This implies (see Falconer \cite[Theorem 4.13 p 64]{falconer}) that
either $f_t(C)=0$ or that the Haussdorff dimension of $C$ is greater than 
$|\gamma|$, the latter being excluded since $\dim_H(C)=1<|\gamma|$.
\qed

\vip

Unfortunately, we are not able to justify the use of such test functions
in the nonlinear case.

\vip

We state our result in Section \ref{res}, we prove it in Section \ref{reg}.
An appendix lies at the end of the paper.

\section{Main result}\label{res}

In the whole paper, 
the angular cross section is supposed to be finite, and to vanish on
$\{|\theta|\geq \pi/2\}$. For the nonlinear Boltzmann equation,
this last condition is not restrictive, for symmetrical reasons,
see the introduction of \cite{advw}. We also impose that $\beta$
vanishes near $0$ for simplicity.

\vip

{\bf Assumption $(A1)$:} $\beta$ is a nonnegative symmetric (even)
measure on $[-\pi/2,\pi/2]\backslash \{0\}$, with total mass 
$\Lambda=\beta([-\pi/2,\pi/2])\in (0,\infty)$.

{\bf Assumption $(A2)$:} there is $\theta_0 \in (0,\pi/2)$ 
such that $\beta((-\theta_0,\theta_0))=0$.

\vip

We now define the notion weak solutions
we will use. We denote by $Lip(\rd)$ the set of globally Lipschitz
functions from $\rd$ to $\rr$.

\begin{defin}
Let $\gamma \in (-2,0)$ be fixed, consider $\beta$ satisfying $(A1)$.
A family $(f_t)_{t\geq 0}$ of probability measures on $\rd$
is said to solve $LB(f_0,\gamma,\beta)$ if for all $\varphi \in Lip(\rd)$, 
all $t\geq 0$,
\begin{eqnarray}
&\intrd f_t(dv) \varphi(v) = \intrd f_0(dv) \varphi(v) + 
\intot ds \intrd f_s(dv) \intrd f_0(dv_*)
\cA \varphi (v,v_*),\label{lbe} \\
&\hbox{where} \quad
\cA \varphi(v,v_*) = \indiq_{\{v \ne v_*\}}
|v-v_*|^\gamma \intpdp \beta(d\theta)
\left[\varphi(v')-\varphi(v)\right] \label{dfA}
\end{eqnarray}
with $v'=v'(v,v_*,\theta)$ defined in (\ref{dfvprime}).
\end{defin}

We will check later that in our situation, all the terms make sense
in (\ref{lbe}). The indicator $v\ne v_*$ is written for convenience, since
$v=v_*$ implies $v'=v$. Our main result writes as follows.

\begin{theo}\label{main}
Let $\gamma \in (-2,-1)$ be fixed, consider $\beta$ satisfying $(A1-A2)$.
Assume that for some $r_0>0$, $f_0$ is a uniform distribution on the circle
$\{|v|=r_0\}$. Then the exists a solution $(f_t)_{t\geq 0}$ to
$LB(f_0,\gamma,\beta)$ 
such that $f_t$ has a density w.r.t. the Lebesgue measure on
$\rd$ for almost every $t>0$.
\end{theo}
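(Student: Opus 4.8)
\textit{Sketch of the proof.} The plan is to follow the formal proof of the Introduction, supplying the two ingredients it takes for granted: a solution must be constructed, and the test functions $\indiq_{\{0\}}$, $\indiq_A$, $\indiq_C$, which are not Lipschitz, must be reached by approximation. Both halves of the hypothesis are used: $\gamma>-2$ for the construction, $\gamma<-1$ for the regularization.

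\textit{Construction.} I would first regularize, replacing $|v-v_*|^\gamma$ by the bounded kernel $K_\e(v,v_*)=(|v-v_*|\vee\e)^\gamma$. The corresponding equation has a bounded generator on bounded measurable functions, hence a unique solution $(f^\e_t)_{t\ge 0}\subset\cP(\rd)$ — also describable as the flow of time-marginals of an explicit pure-jump process whose jumps use the frozen law $f_0$ — and for $f^\e$ identity (\ref{lbe}) holds with \emph{every} bounded measurable $\varphi$. From $|v'-v|=|(R_\theta-I)\tfrac{v-v_*}{2}|\le|v-v_*|\,|\sin(\theta/2)|$ and $(A1)$ one gets both $|\cA\varphi(v,v_*)|\le C\,\mathrm{Lip}(\varphi)|v-v_*|^{\gamma+1}$ and $K_\e(v,v_*)|v-v_*|\le|v-v_*|^{\gamma+1}$, and, since $\gamma+1>-1$, the map $v\mapsto\intrd f_0(dv_*)|v-v_*|^{\gamma+1}$ is bounded and continuous on $\rd$. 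These facts yield a Lyapunov bound $\tfrac{d}{dt}\intrd f^\e_t(dv)|v|^2\le C\big(1+\intrd f^\e_t(dv)|v|^2\big)$ with $C$ independent of $\e$, hence tightness and $t$-equicontinuity of $(f^\e)_\e$, so a subsequence converges in $C([0,T],\cP(\rd))$ to some $f$. To identify $f$ as a solution of $LB(f_0,\gamma,\beta)$ one checks that $v\mapsto\intrd f_0(dv_*)\cA\varphi(v,v_*)$ is bounded continuous and that its $K_\e$-analogue converges to it \emph{uniformly in $v$} — the error being at most $\int_{\{|v-v_*|<\e\}}f_0(dv_*)|v-v_*|^{\gamma+1}=O(\e^{\gamma+2})$, again by $\gamma+2>0$. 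All terms of (\ref{lbe}) are then finite, and by rotational invariance of $f_0$ and of the equation, $f_t$ is radially symmetric for every $t$.

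\textit{Two enabling facts.} The regularization rests on validating (\ref{lbe}) for $\varphi=\indiq_A$, $A$ Lebesgue-null, i.e.\ the linearized analogue of (\ref{forfor}):
\begin{equation*}
f_t(A)+\Lambda\intot ds\int_A f_s(dv)\,\ell(v)\le f_0(A),\qquad \ell(v):=\intrd f_0(dv_*)|v-v_*|^\gamma.
\end{equation*}
Two structural facts make this possible. (a) For $f_t(dv)f_0(dv_*)\beta(d\theta)$-a.e.\ $(v,v_*,\theta)$ one has $v'\notin A$: identifying $\rd$ with $\mathbb{C}$, write $\tfrac{I+R_\theta}{2}=c_+R_{\alpha_+}$ and $\tfrac{I-R_\theta}{2}=c_-R_{\alpha_-}$ with $c_\pm>0$ since $0<|\theta|\le\pi/2$ on $\mathrm{supp}\,\beta$ by $(A1)$–$(A2)$; with $v=\rho e^{i\phi}$, $v_*=r_0e^{i\psi}$, radial symmetry makes $\phi,\psi$ independent and uniform, so $v'=\rho c_+e^{i(\phi+\alpha_+)}+r_0c_-e^{i(\psi+\alpha_-)}$ is a sum of two independent circle-uniform variables, whose law is absolutely continuous as soon as $\rho>0$; and $\rho>0$ $f_t$-a.s.\ because $f_t(\{0\})=0$, which is Step~1 of the formal proof (test against $(1-|v|/\delta)_+$, let $\delta\to0$: the gain term vanishes since $\{v_*:v'(v,v_*,\theta)=0\}$ is a single point, hence $f_0$-null, and the loss term has the favourable sign). (b) $\ell$ is finite and bounded on $\{\mathrm{dist}(v,C)\ge\delta\}$ for each $\delta>0$, whereas $\ell\equiv+\infty$ on $C=\{|v|=r_0\}$, the latter being the divergence of $\intpp|\sin(u/2)|^\gamma\,du$ — valid precisely because $\gamma<-1$.

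\textit{Main obstacle and conclusion.} The genuinely technical step is to pass from $Lip(\rd)$ to $\indiq_A$ inside (\ref{lbe}). I would fix $A$ Lebesgue-null with $\mathrm{dist}(A,C)>0$ (the general Lebesgue-null case reduces to this by exhausting $\rd\setminus C$), approximate $\indiq_A$ from outside by Lipschitz functions supported in neighbourhoods $U_n\downarrow A$ kept away from $C$, and pass to the limit: the loss term only improves the inequality, while the delicate gain term $\int f_s(dv)\int f_0(dv_*)|v-v_*|^\gamma\intpdp\beta(d\theta)\,\indiq_{U_n}(v')$ must be split according to whether $v$ lies in a fixed neighbourhood of $A$ — away from $A$ the weight $|v-v_*|^\gamma$ is bounded on $\{v'\in U_n\}$ and fact (a) makes the term vanish, near $A$ it is bounded by $f_s$ of a small set and one closes a Gronwall estimate. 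Granting the displayed inequality, the theorem follows: for $A$ Lebesgue-null with $\mathrm{dist}(A,C)>0$ and $f_0(A)=0$ it gives $f_t(A)=0$ for all $t$; taking $A=C$, since $\ell\equiv+\infty$ on $C$ the left side is infinite unless $f_s(C)=0$ for a.e.\ $s\le t$, whereas the right side is $f_0(C)=1$ — hence $f_s(C)=0$ for a.e.\ $s>0$, with no Hausdorff-dimension argument needed (unlike in the nonlinear formal proof). Therefore, for a.e.\ $t>0$, $f_t$ charges no Lebesgue-null set, i.e.\ it has a density.
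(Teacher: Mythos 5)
Your construction step and your treatment of Lebesgue-null sets away from the circle are sound and essentially parallel the paper (cutoff kernel, tightness/equicontinuity, uniform convergence of the frozen drift using $\gamma+1>-1$; then absolute continuity of the law of $v'$ under the angular randomization, which is the paper's Lemmas \ref{p0} and \ref{densp1} phrased in two dimensions rather than for the radial marginal). The genuine gap is in your last step, removing the mass sitting on $C=\{|v|=r_0\}$ itself. Your displayed inequality is obtained by outer Lipschitz approximation of $\indiq_A$ by functions supported in neighbourhoods $U_n\downarrow A$ \emph{kept away from} $C$, and your reduction ``exhaust $\rd\setminus C$'' only covers null sets at positive distance from $C$. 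The one application you actually need, $A=C$, is exactly the case your scheme cannot reach: writing ``granting the displayed inequality \dots taking $A=C$'' begs the question, because for $A=C$ the left-hand side contains $\int_0^t ds\int_C f_s(dv)\,\ell(v)$ with $\ell\equiv+\infty$ on $C$, and asserting that this quantity is finite (hence forcing $f_s(C)=0$ a.e.) is precisely what must be proved. Any approximation of $\indiq_C$ by Lipschitz or shell-type test functions produces a gain term into shrinking neighbourhoods of $C$ weighted by $|v-v_*|^\gamma$, with $v$ close to $v_*$ allowed, and this term is not controlled by your argument (``bounded by $f_s$ of a small set plus Gronwall'' does not apply here, since near $C$ the weight $\ell$ is unbounded).

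This is where the paper does real work (Lemma \ref{p1}). It tests the radial equation with $\psi=\indiq_{\{|r^2-1|\le\e\}}$, which is admissible because its non-Lipschitz part is supported away from $r=1$, and proves two quantitative bounds: the escape term satisfies $A_\e(1)\ge c_0\,\e^{\gamma+1}$, while the re-entry term satisfies $B_\e(r)\le c_1\,\e\,|r^2-1|^\gamma\indiq_{\{|r^2-1|>\e\}}+c_2$. This yields (\ref{ob1}), i.e. $\int_0^T\lambda_s(\{1\})\,ds\le \kappa_T\e^{|\gamma|-1}+\kappa_T\e^{|\gamma|}\int_0^T ds\int\lambda_s(dr)|r^2-1|^\gamma\indiq_{\{|r^2-1|>\e\}}$. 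One cannot simply let $\e\to0$ here, because $\int_0^T ds\int\lambda_s(dr)|r^2-1|^\gamma$ is not known to be finite; the paper closes the argument with the de la Vall\'ee Poussin Lemma \ref{lvp}, which supplies a slowly growing $g$ making the second term $O(1/g(1/\e))$, while the first term vanishes precisely because $|\gamma|>1$. So the hypothesis $\gamma<-1$ enters through the rate $\e^{\gamma+1}\to\infty$ of the escape term, not through the formal statement ``$\ell=\infty$ on $C$''. Your proposal would become a proof if you replaced the ``granting the inequality for $A=C$'' step by an argument of this type (or any other rigorous control of the gain into thin shells around $C$); as written, the central claim of the theorem is assumed rather than established.
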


Let us comment on this result.
Consider $\gamma\in(-2,0)$, and an initial condition $f_0$ satisfying
\begin{equation}\label{ffini}
\hbox{for } f_0 \hbox{-a.e. } v\in\rd, \quad 
\lambda_0(v):=\intrd f_0(dv_*) |v-v_*|^\gamma \indiq_{\{v_* \ne v\}} <\infty.
\end{equation}
Then we consider a solution $(f_t)_{t\geq 0}$ to $LB(f_0,\gamma,\beta)$. 
Applying (\ref{lbe}) with some nonnegative Lipschitz 
function $\varphi$ and using $(A1)$, we immediately get
\begin{equation*}
\frac{d}{dt}\intrd f_t(dv) \varphi(v) \geq
- \Lambda \intrd f_t(dv) \lambda_0(v) \varphi(v),
\end{equation*}
whence, at least formally,
$f_t(dv) \geq e^{-\Lambda t \lambda_0(v)} f_0(dv)$ for all $t\geq 0$:
no regularization may occur.

\vip

This result is fully nonlinear and quite surprising: if
$f_0$ is regular enough to satisfy (\ref{ffini}) but is not a function,
then it does never become a function. Such examples can easily be
built: as shown in Falconer \cite[Theorem 4.13 p 64]{falconer},
for any Borel subset $A\subset\rd$ with Haussdorff dimension strictly greater
than $|\gamma|$, we may find a probability measure $f_0$ on $\rd$ with 
$f_0(A)=1$ and such that
such that $\int_\rd f_0(dv)\int_\rd f_0(dv_*) |v-v_*|^\gamma<\infty$,
which of course imply (\ref{ffini}).

Let us insist on the fact that initial conditions satisfying (\ref{ffini})
are more regular than the uniform distribution on $\{|v|=1\}$:
the latter gives positive weight to some sets with lower dimension.

Note also that on the contrary, $f_0$ has to be sufficiently regular.
If for example we assume that $f_0=\frac{1}{2} (\delta_{v_0}+\delta_{v_1})$,
no regularization may hold (due to the indicator function in (\ref{ffini})).
The same argument applies to $f_0=\frac{1}{2}(\delta_{v_0}+g_0)$, 
for some bounded probability density $g_0$.

\vip

Finally, let us mention that our result holds even if
$\beta= \delta_{\theta_0}+\delta_{-\theta_0}$, for some
fixed $\theta_0\in (0,\pi/2)$: the regularization does really
not follow from the regularity of the angular cross section.

\section{Proof}\label{reg}

The aim of this section is to prove Theorem \ref{main}.
We assume in the whole section that
$(A1-A2)$ hold, 
that $r_0=1$ (for simplicity), and that $\gamma\in (-2,-1)$ is fixed.
Thus our initial condition $f_0$ is defined by 
\begin{equation}\label{cini}
\int_\rd \varphi(v)f_0(dv)= \frac{1}{2\pi} \int_{-\pi}^\pi 
d\alpha \varphi(\be_\alpha),
\end{equation}
for all measurable $\varphi:\rd\mapsto \rr_+$, 
where $\be_\alpha:=(\cos \alpha, \sin \alpha)$.

\begin{prop}\label{exist}
$(i)$ There exists a radially symmetric solution $(f_t)_{t\geq 0}$
to $LB(f_0,\gamma,\beta)$. 

(ii) For $t\geq 0$, define the probability measure
$\lambda_t$ on $\rr_+$ by $\lambda_t(A)=f_t(\{|v|\in A\})$. 
Then we have, for all $\varphi \in L^\infty(\rd)$, all $t\geq 0$, 
\begin{equation}\label{rdrdtheta}
\int_{\rd} \varphi(v) f_t(dv) = \frac{1}{2\pi} \int_{-\pi}^\pi d\alpha
\int_0^\infty \lambda_t(dr) \varphi(r\be_\alpha).
\end{equation}
(iii) Consider the class $\cF$ of functions of the form $\psi=\varphi+ \delta$,
where $\varphi\in Lip(\rr_+)$, 
and with $\delta
\in L^\infty(\rr)$, with $1\notin$ {\rm supp} $\delta$. Then
\begin{equation}\label{eqr}
\intrp \lambda_t(dr) \psi(r) = \psi(1) + \intot ds \intrp \lambda_s(dr) 
\intpdp \beta(d\theta)\intpp \frac{d\alpha}{2\pi} 
(r^2+1-2r\cosa)^{\gamma/2} [\psi(r')-\psi(r) ]
\end{equation}
where $r'=r'(r,\theta,\alpha)=
\left(\frac{1+\cost}{2}r^2 + \frac{1-\cost}{2} 
-r \sint \sina \right)^{1/2}$.
\end{prop}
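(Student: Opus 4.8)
The plan is to build the solution by a fixed-point/approximation scheme that takes advantage of the linear structure of $LB(f_0,\gamma,\beta)$: since $f_0$ enters the collision operator $\cA$ only through its second slot, equation (\ref{lbe}) is genuinely linear in the unknown $(f_t)_{t\geq 0}$. First I would observe that the total rate felt at a point $v$ is $\lambda_0(v)\,\beta([-\pi/2,\pi/2]) = \Lambda\lambda_0(v)$ with $\lambda_0(v)=\intrd f_0(dv_*)|v-v_*|^\gamma\indiq_{\{v\ne v_*\}}$, and the first genuine task is to check that this is finite for $f_0$-a.e.\ $v$, i.e.\ for all $v$ on the unit circle. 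With $f_0$ the uniform law on $\{|v|=1\}$, one computes $\lambda_0(\be_\alpha)=\frac{1}{2\pi}\intpp d\alpha'\,|2\sin((\alpha-\alpha')/2)|^\gamma$, which is a convergent integral precisely because $\gamma>-1$; this is where the hypothesis $\gamma\in(-2,-1)$ versus the regularity threshold matters, and I would isolate it as a lemma (it is presumably the content of the appendix). Moreover $\lambda_0$ is bounded on the circle by rotational invariance, say $\lambda_0\equiv\bar\lambda<\infty$ there, which makes the collision kernel a bounded perturbation and lets one construct $(f_t)$ by a convergent Picard iteration (or equivalently by a Duhamel/exponential-formula representation), giving existence in (i). Radial symmetry is then automatic: the construction commutes with rotations because $f_0$ and $\beta$ do, so uniqueness within the Picard scheme forces each $f_t$ to be rotationally invariant.

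For (ii), once $f_t$ is radially symmetric, disintegrating along the radial coordinate gives a probability measure $\lambda_t$ on $\rp$ with $\lambda_t(A)=f_t(\{|v|\in A\})$, and (\ref{rdrdtheta}) is just the statement that a radial measure is the product of its radial marginal with the normalized angular (Lebesgue) measure on the circle; I would verify it first for $\varphi$ continuous and then extend to $L^\infty$ by a monotone-class argument. The only subtlety is that the disintegration must hold simultaneously for all $t$, which again follows from the explicit Picard construction, at each stage of which radial symmetry is preserved.

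Part (iii) is the computational heart. Starting from (\ref{lbe}) with a test function $\varphi(v)=\psi(|v|)$ and using (\ref{rdrdtheta}) to replace $f_0(dv_*)$ by $\frac{1}{2\pi}d\alpha_*$ on the unit circle, I would substitute $v=r\be_{\alpha}$, $v_*=\be_{\alpha_*}$, use rotational invariance to fix $\alpha=0$ (absorbing the shift into $\alpha_*\mapsto\alpha$), and then just carry out the trigonometry: $|v-v_*|^2 = r^2+1-2r\cosa$, and from (\ref{dfvprime}), $v'=\frac{v+v_*}{2}+R_\theta\frac{v-v_*}{2}$ has squared norm $|v'|^2 = \frac14|v+v_*|^2+\frac14|v-v_*|^2 + \frac12(v+v_*)\cdot R_\theta(v-v_*)$, which after expansion equals $\frac{1+\cost}{2}r^2+\frac{1-\cost}{2}-r\sint\sina$, matching the stated $r'(r,\theta,\alpha)^2$. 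So $\varphi(v')-\varphi(v) = \psi(r') - \psi(r)$ and (\ref{eqr}) drops out. The one point requiring care — and the step I expect to be the main obstacle — is legitimacy: for a general $\psi\in\cF$ (Lipschitz plus a bounded piece with $1\notin\mathrm{supp}\,\delta$) one must justify that (\ref{lbe}) may be applied with the possibly-discontinuous test function $\varphi(v)=\psi(|v|)$, and that all the integrals in (\ref{eqr}) converge. Convergence of the $|v-v_*|^\gamma$ singularity is controlled by $\int\lambda_s(dr)\,\lambda_0<\infty$ as above for the part of the source living near the unit sphere; the bounded piece $\delta$ is handled by the condition $1\notin\mathrm{supp}\,\delta$ together with Step 1--Step 2 of the formal proof (namely $f_s$ gives no mass to $\{0\}$ and, where $\delta$ is supported, $r'\ne 1$ can force the relevant increments to vanish $f_s$-a.e.), so that $\delta(r')$ contributes a bounded, integrable quantity. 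I would first prove (\ref{eqr}) for $\psi\in Lip(\rp)$ by a direct approximation of $\indiq_{\{v\ne v_*\}}|v-v_*|^\gamma$ by bounded kernels and dominated convergence, and then add the $\delta$-term by hand using the structure just described; assembling these two pieces cleanly is the delicate part.
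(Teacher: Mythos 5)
There is a genuine error at the foundation of your part (i). You claim that the total jump rate $\lambda_0(v)=\intrd f_0(dv_*)|v-v_*|^\gamma$ is finite on the unit circle ``precisely because $\gamma>-1$'', and you then build existence on treating the collision term as a bounded-rate perturbation solvable by Picard iteration. But the standing hypothesis is $\gamma\in(-2,-1)$, so $\gamma<-1$: the integral $\frac{1}{2\pi}\intpp d\alpha'\,|2\sin((\alpha-\alpha')/2)|^\gamma$ behaves like $\int |u|^\gamma du$ near $u=0$ and \emph{diverges}. Thus $\lambda_0\equiv+\infty$ on the support of $f_0$, and your ``bounded perturbation'' construction collapses. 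Worse, if $\lambda_0$ were finite $f_0$-a.e., the remark following Theorem \ref{main} would give $f_t(dv)\geq e^{-\Lambda t\lambda_0(v)}f_0(dv)$ and no regularization could occur, so your premise would contradict the very theorem the proposition is meant to support. The paper's route is different: it never needs a finite rate. It truncates the kernel, replacing $|v-v_*|^\gamma$ by $\min(|v-v_*|^\gamma,n)$, solves each truncated equation $LB_n$ by Picard/Gronwall in total variation, and passes to the limit using tightness and equicontinuity. What makes all terms finite is not the rate but the cancellation $|v'-v|\leq|v-v_*|$: for Lipschitz $\varphi$ the integrand is of order $|v-v_*|^{\gamma+1}$ with $\gamma+1\in(-1,0)$, and $v\mapsto\intrd f_0(dv_*)|v-v_*|^{\gamma+1}$ is bounded (this is the content of the appendix estimates (\ref{ms})--(\ref{ms4})). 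You need this gain of one power, applied uniformly in the truncation parameter, to get both the a priori bounds and the passage to the limit.

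Your parts (ii) and (iii) are essentially on track: the disintegration of a radial measure, the trigonometric computation of $|v'|^2$ and $|v-v_*|^2$, and the reduction by rotational invariance all match the paper's Step 2--3. For the extension to $\psi=\varphi+\delta$ with $1\notin\mathrm{supp}\,\delta$, your sketch is in the right spirit but too vague at the decisive point: the paper approximates $\psi$ by Lipschitz functions via Lusin's theorem and then needs the explicit domination showing that the kernel integrated over the region $\{|r'^2-1|\geq\e\}\cup\{|r^2-1|\geq\e\}$ is finite, using that $|r'^2-1|\geq\e$ forces $|r^2-1|\geq\e/2$ or $r|\sin\alpha|\geq\e/2$, on which sets $(r^2+1-2r\cos\alpha)^{\gamma/2}$ is bounded. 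That estimate, not Steps 1--2 of the formal proof, is what legitimizes the discontinuous test functions; as written, your argument for (iii) leaves this open, and your argument for (i) is wrong as stated.
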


To understand (\ref{eqr}), observe that 
$r'(r,\theta,\alpha)=|v'(r\be_{\alpha_0+\alpha},
\be_{\alpha_0},\theta)|$ and that
$(r^2+1-2r\cosa)^{\gamma/2}=|r\be_{\alpha_0+\alpha}-\be_{\alpha_0} |^\gamma$
for any $\alpha_0$.
This result is routine and will be checked at the end of the section. 
We now start the proof of Theorem \ref{main}. First, 
$0$ cannot be reached by the radius distribution 
$\lambda_t$.

\begin{lem}\label{p0}
Consider the family $(\lambda_t)_{t\geq 0}$ introduced in Proposition 
\ref{exist}. For all $t\geq 0$, $\lambda_t(\{0\})=0$.
\end{lem}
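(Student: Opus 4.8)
The plan is to use the evolution equation (\ref{eqr}) with a carefully chosen test function that detects the mass at $r=0$, in the spirit of Step~1 of the formal proof. First I would take $\psi=\indiq_{\{0\}}$, which belongs to the class $\cF$ (it is the sum of the zero Lipschitz function and a bounded function $\delta=\indiq_{\{0\}}$ whose support $\{0\}$ does not contain $1$). Plugging this into (\ref{eqr}) gives
\begin{equation*}
\lambda_t(\{0\}) = \indiq_{\{0\}}(1) + \intot ds \intrp \lambda_s(dr)
\intpdp \beta(d\theta)\intpp \frac{d\alpha}{2\pi}
(r^2+1-2r\cosa)^{\gamma/2} \left[\indiq_{\{0\}}(r')-\indiq_{\{0\}}(r)\right],
\end{equation*}
and since $\indiq_{\{0\}}(1)=0$, it remains to control the sign of the integrand.

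The key observation is that $r'(r,\theta,\alpha)=0$ is essentially impossible, while $\indiq_{\{0\}}(r)$ contributes a nonpositive term. Concretely, $r'=0$ would require $\frac{1+\cost}{2}r^2 + \frac{1-\cost}{2} - r\sint\sina = 0$. Since $\theta\in[-\pi/2,\pi/2]\setminus(-\theta_0,\theta_0)$ by $(A1-A2)$, we have $\cost\in[0,\cos\theta_0]$ and in particular $\cost<1$, so the term $\frac{1-\cost}{2}$ is bounded below by the strictly positive constant $\frac{1-\cos\theta_0}{2}$; combined with $\frac{1+\cost}{2}r^2\geq 0$ and $-r\sint\sina\geq -r$, this shows $r'^2 \geq \frac{1-\cos\theta_0}{2} - r$, which is strictly positive for $r$ small — but I need to rule out $r'=0$ for \emph{all} $r$, not just small $r$. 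A cleaner route: for fixed $r>0$ and $\theta\ne 0$, the set of $\alpha$ for which $r'(r,\theta,\alpha)=0$ is at most finite (the defining equation is, for fixed $r,\theta$, of the form $c_1 - c_2\sina = 0$ with $c_2=r\sint\ne 0$ when $\sint\ne0$, hence at most two solutions $\alpha$ in $(-\pi,\pi)$; and it genuinely has no solution when the resulting $\sina$ would need modulus $>1$, which happens precisely because $c_1=\frac{1+\cost}{2}r^2+\frac{1-\cost}{2}>0$). Thus $\indiq_{\{0\}}(r')=0$ for $d\alpha$-a.e.\ $\alpha$, for every $r>0$ and every $\theta\ne 0$, so the $r'$-contribution to the integral vanishes.

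Therefore the integrand is $\leq -\indiq_{\{0\}}(r)(r^2+1-2r\cosa)^{\gamma/2}\int d\alpha/(2\pi)\int\beta(d\theta)$, and restricting the $\lambda_s(dr)$ integral to $\{r=0\}$ I get
\begin{equation*}
\lambda_t(\{0\}) \leq -\,\Lambda \intot \lambda_s(\{0\})\,(0^2+1-0)^{\gamma/2}\,ds
= -\,\Lambda \intot \lambda_s(\{0\})\,ds \leq 0,
\end{equation*}
where I used that at $r=0$ the kernel $(r^2+1-2r\cosa)^{\gamma/2}=1$. Since $\lambda_t(\{0\})\geq 0$ always, this forces $\lambda_t(\{0\})=0$ for all $t\geq 0$ (and in fact Gronwall is not even needed — nonnegativity plus the inequality $\lambda_t(\{0\})\leq 0$ already gives it). The main obstacle I anticipate is the rigorous justification that $\psi=\indiq_{\{0\}}$ is an admissible test function and that all integrals in (\ref{eqr}) converge when $\psi$ is merely bounded: near $r=1$ the singular weight $(r^2+1-2r\cosa)^{\gamma/2}$ with $\gamma\in(-2,-1)$ could blow up, but the saving grace is that $\delta=\indiq_{\{0\}}$ is supported away from $1$, so $\psi(r')-\psi(r)$ vanishes unless $r$ or $r'$ is near $0$, a region where the weight is harmless; this localization argument (already needed to set up (\ref{eqr})) is what makes the choice legitimate.
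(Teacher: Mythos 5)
Your proof is correct and follows essentially the same route as the paper: apply (\ref{eqr}) with $\psi=\indiq_{\{0\}}\in\cF$, use $\indiq_{\{0\}}(1)=0$, and note that $\indiq_{\{0\}}(r')$ vanishes for $d\alpha$-a.e.\ $\alpha$ when $r>0$ (since $r\sin\theta\ne 0$ on the support of $\beta$) and for every $\alpha$ when $r=0$, so the right-hand side is nonpositive and $\lambda_t(\{0\})=0$ follows. The only slip is the aside asserting that $c_1>0$ forces the required $|\sin\alpha|$ to exceed $1$ (it need not), but this is immaterial since ``at most two solutions in $\alpha$'' already yields the a.e.\ statement.
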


\begin{proof} Since $\indiq_{\{0\}}$ belongs to $\cF$, we may apply
(\ref{eqr}). We realize that 

(a) initially, $\indiq_{\{0\}}(1)=0$; 

(b) (when $r=0$) $\indiq_{\{0\}}(r'(0,\theta,\alpha))-\indiq_{\{0\}}(0)\leq 0$ 
for all $\alpha,\theta$;

(c) when $r>0$, for all $\theta\in [-\pi/2,\pi/2]\backslash
\{0\}$, $\indiq_{\{0\}}(r'(r,\theta,\alpha))=0$ 
for $d\alpha$-a.e. $\alpha\in [-\pi,\pi]$ (use here that $r\sint \ne 0$).

Hence (\ref{eqr}) yields that for all $t\geq 0$,
$\int_0^\infty \lambda_t(dr) \indiq_{\{0\}}(r) \leq 0$, 
and the result follows.
\end{proof}

Then we may prove that the radius distribution $\lambda_t$ has a density,
except maybe at $1$.

\begin{lem}\label{densp1}
Consider the family $(\lambda_t)_{t\geq 0}$ introduced in Proposition 
\ref{exist}. Consider a Lebesgue-null subset $A\subset \rp$ 
with $1\notin A$. For all $t\geq 0$, $\lambda_t(A)=0$.
\end{lem}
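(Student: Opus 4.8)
\textbf{Proof plan for Lemma \ref{densp1}.}
The plan is to mimic the formal argument from the introduction, but now rigorously, using only the weak formulation (\ref{eqr}) with admissible test functions from the class $\cF$. Let $A\subset\rp$ be Lebesgue-null with $1\notin A$. The natural candidate test function is $\psi=\indiq_A$; since $1\notin A$, we have $1\notin\,${\rm supp}$\,\indiq_A$ after shrinking to $A\cap(\rp\setminus B)$ for a suitable neighbourhood $B$ of $1$, so $\indiq_A\in\cF$ (it is the sum of the zero Lipschitz function and a bounded function whose support avoids $1$). First I would plug $\psi=\indiq_A$ into (\ref{eqr}) and observe that $\indiq_A(1)=0$, so
\begin{equation*}
\intrp \lambda_t(dr)\,\indiq_A(r) = \intot ds \intrp \lambda_s(dr)\intpdp \beta(d\theta)\intpp \frac{d\alpha}{2\pi}(r^2+1-2r\cosa)^{\gamma/2}\bigl[\indiq_A(r')-\indiq_A(r)\bigr].
\end{equation*}
Since $\indiq_A(r')-\indiq_A(r)\leq\indiq_A(r')$, it suffices to show that the positive part $\indiq_A(r'(r,\theta,\alpha))$ vanishes for $\lambda_s(dr)\beta(d\theta)\,d\alpha$-almost every $(r,\theta,\alpha)$; then the right-hand side is $\leq 0$, forcing $\lambda_t(A)=0$ for every $t\geq 0$.

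The key computation is therefore to control, for fixed $r\geq 0$ and fixed $\theta\in[-\pi/2,\pi/2]\setminus\{0\}$ (so $\sint\neq 0$ by $(A2)$, or at least $\theta\neq 0$), the image of Lebesgue measure on $\alpha\in[-\pi,\pi]$ under the map $\alpha\mapsto r'(r,\theta,\alpha)$. From the explicit formula $r'^2=\frac{1+\cost}{2}r^2+\frac{1-\cost}{2}-r\sint\sina$, the map $\alpha\mapsto r'^2$ is, on each of the two monotonicity intervals of $\sina$, a smooth function with derivative $-r\sint\cosa$, which vanishes only at $\alpha=\pm\pi/2$. Hence $\alpha\mapsto r'$ is piecewise a $C^1$-diffeomorphism onto its image except at finitely many points, so it pushes $d\alpha$ forward to a measure that is absolutely continuous with respect to Lebesgue measure on $\rp$ — \emph{provided} $r\sint\neq 0$, i.e. provided $r>0$ (we use $\theta\neq 0$ throughout). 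Consequently, if $A$ is Lebesgue-null, then for $r>0$ we have $\indiq_A(r'(r,\theta,\alpha))=0$ for a.e.\ $\alpha$. For the degenerate case $r=0$: then $r'^2=\frac{1-\cost}{2}$ is a constant independent of $\alpha$, equal to $|\sin(\theta/2)|$; one checks this constant is never $0$ (since $\theta\neq 0$) and, more delicately, one must ensure it does not land in $A$ — but here we invoke Lemma \ref{p0}, which gives $\lambda_s(\{0\})=0$, so the contribution of $r=0$ to the $\lambda_s(dr)$ integral is null anyway and this case can simply be discarded.

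Putting these together: for $\lambda_s(dr)$-a.e.\ $r$ we have $r>0$, and then for every $\theta\neq 0$ and a.e.\ $\alpha$ the point $r'$ avoids the Lebesgue-null set $A$, so the integrand's positive part vanishes $\lambda_s(dr)\beta(d\theta)\,d\alpha$-a.e. This yields $\intrp\lambda_t(dr)\indiq_A(r)\leq 0$, hence $\lambda_t(A)=0$ for all $t\geq 0$. The main obstacle I anticipate is purely bookkeeping: verifying carefully that $\indiq_A\in\cF$ (handling the support condition near $1$, possibly by first treating $A\setminus(1-\eta,1+\eta)$ and letting $\eta\to 0$), and making the ``pushforward of $d\alpha$ under $\alpha\mapsto r'$ is absolutely continuous'' statement precise via a change of variables that correctly accounts for the critical points at $\alpha=\pm\pi/2$ and the fact that $r'^2$ ranges over an interval; none of this is deep, but it must be done cleanly since the whole regularization result rests on it, together with the integrability bound needed later for the circle $C=\{|v|=1\}$ itself.
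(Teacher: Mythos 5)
Your plan is correct and follows the paper's proof essentially verbatim: test (\ref{eqr}) with $\indiq_A$, discard $r=0$ via Lemma \ref{p0}, and kill the gain term through the substitution $\alpha\mapsto r'(r,\theta,\alpha)$ for $r>0$ and $\theta\neq 0$, concluding $\intrp\lambda_t(dr)\indiq_A(r)\leq 0$. The support issue near $1$ that you flag is resolved in the paper exactly as you anticipate: one first treats $A_n=A\cap\{|r-1|\geq 1/n\}$ (so that $1\notin\bar A_n$ and $\indiq_{A_n}\in\cF$) and then lets $n\to\infty$ by monotone convergence, using $1\notin A$.
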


\begin{proof}
We first assume that $1\notin \bar A$.
Then $\indiq_A$ belongs to $\cF$, so that we may use (\ref{eqr}).
Since initially 
$\indiq_A(1)=0$ and due to Lemma \ref{p0}, it suffices to prove that
for all $r>0$, for all $\theta \in [-\pi/2,\pi/2]\backslash
\{0\}$, $\indiq_A(r'(r,\theta,\alpha))=0$ for $d\alpha$-a.e. 
$\alpha\in [-\pi,\pi]$. 
But this is immediate, using that $r\sint \ne 0$, that $A$ is Lebesgue-null,
and the substitution $\alpha \mapsto r'(r,\theta,\alpha)$.

As previously, (\ref{eqr}) yields that for all $t\geq 0$,
$\int_0^\infty \lambda_t(dr) \indiq_A(r) \leq 0$, 
and the result follows.
\vip

Now if $1\in \bar A$, we consider $A_n=A\cap \{|r-1|\geq 1/n\}$,
which increases to $A$ (because $1\notin A$ by assumption). 
Since $1\notin \bar A_n$,
we know that
for all $t\geq 0$, all $n\geq 1$, $\lambda_t(A_n)=0$. Making $n$ tend to 
infinity, we get $\lambda_t(A)=0$
for all $t\geq 0$ by the Beppo-Levi Theorem.
\end{proof}

Finally, we prove that our solution leaves immediately the unit circle.

\begin{lem}\label{p1}
Consider the family $(\lambda_t)_{t\geq 0}$ introduced in Proposition 
\ref{exist}. 
For a.e. $t> 0$, $\lambda_t(\{1\})=0$.
\end{lem}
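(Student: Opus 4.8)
The plan is to exploit equation (\ref{eqr}) applied to a test function $\psi$ that detects the unit circle, exactly as in Step 3 of the formal proof. The natural candidate is $\psi=\indiq_{\{1\}}$. This function is \emph{not} in the class $\cF$ (its support contains $1$), so it cannot be plugged directly into (\ref{eqr}). First I would work around this by approximation: for small $\e>0$ let $\psi_\e$ be a Lipschitz function on $\rp$ with $\psi_\e(1)=1$, $0\le\psi_\e\le 1$, and $\psi_\e$ supported in $[1-\e,1+\e]$, so that $\psi_\e\in Lip(\rp)\subset\cF$. Apply (\ref{eqr}) to $\psi_\e$, keeping only the term $\psi_\e(r')$ (which is $\ge 0$) with a minus sign after moving it, to obtain an inequality of the type
\begin{equation*}
\intrp \lambda_t(dr)\,\psi_\e(r) + \intot ds \intrp \lambda_s(dr)\,\psi_\e(r)
\intpdp \beta(d\theta)\intpp \frac{d\alpha}{2\pi}(r^2+1-2r\cosa)^{\gamma/2}
\le 1.
\end{equation*}
Letting $\e\downarrow 0$, monotone/dominated convergence should give, with $C=\{1\}$,
\begin{equation*}
\lambda_t(\{1\}) + \intot ds\, \lambda_s(\{1\})\,I \le 1,
\qquad I:=\intpdp \beta(d\theta)\intpp \frac{d\alpha}{2\pi}(2-2\cosa)^{\gamma/2}.
\end{equation*}

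The crux is then to show $I=+\infty$. Indeed $(2-2\cosa)^{\gamma/2}=|{\be_\alpha-\be_0}|^{\gamma}\sim |\alpha|^{\gamma}$ near $\alpha=0$, and since $\gamma<-1$ the integral $\int_{-\pi}^\pi|\alpha|^\gamma\,d\alpha$ diverges; the $\beta(d\theta)$-integration only contributes the finite total mass $\Lambda>0$ by $(A1)$, so $I=\Lambda\cdot(+\infty)=+\infty$. (This is precisely the point where the hypothesis $\gamma\in(-2,-1)$, rather than merely $\gamma<0$, is used — it is the ``$\dim_H(C)=1<|\gamma|$'' condition of the formal proof, now seen directly on the integral.) Consequently, in the inequality above, the term $\intot \lambda_s(\{1\})\,I\,ds$ can only be finite if $\lambda_s(\{1\})=0$ for a.e.\ $s>0$, which is the claim. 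To make this rigorous I would first derive, for each fixed $T>0$, $\int_0^T \lambda_s(\{1\})\,ds<\infty$ from the bound with $t=T$ (using $I>0$ would already give finiteness, but we need more), then upgrade: since the bound holds for every $t$, integrating or taking $t\to\infty$ yields $\int_0^\infty \lambda_s(\{1\})\,ds\cdot I\le 1$ with $I=\infty$, forcing $\int_0^\infty\lambda_s(\{1\})\,ds=0$, hence $\lambda_t(\{1\})=0$ for a.e.\ $t>0$.

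The main obstacle I anticipate is the interchange of limits when passing from $\psi_\e$ to $\indiq_{\{1\}}$ inside the double integral $\int\beta(d\theta)\int d\alpha$: one must ensure that the lower bound on the dissipation term survives the limit $\e\downarrow 0$, i.e.\ that $\liminf_\e \int\lambda_s(dr)\psi_\e(r)\int\beta(d\theta)\int\frac{d\alpha}{2\pi}(r^2+1-2r\cosa)^{\gamma/2}$ dominates $\lambda_s(\{1\})\cdot I$. Since $\psi_\e\ge\indiq_{\{1\}}$ and the kernel $(r^2+1-2r\cosa)^{\gamma/2}$ is nonnegative, Fatou's lemma in the variables $(s,r,\theta,\alpha)$ against the measure $ds\,\lambda_s(dr)\,\beta(d\theta)\,d\alpha$ handles this: $\liminf_\e \psi_\e(r)(r^2+1-2r\cosa)^{\gamma/2}\ge \indiq_{\{1\}}(r)(2-2\cosa)^{\gamma/2}$ pointwise, and Fatou gives the desired inequality with the right-hand side $\intot\lambda_s(\{1\})\,ds\cdot I$. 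A minor technical check is that the full expression in (\ref{eqr}) for $\psi_\e$ is well-defined (finiteness of the gain term), but this is already guaranteed since $\psi_\e\in\cF$ and Proposition \ref{exist} asserts (\ref{eqr}) holds for all $\psi\in\cF$. With Fatou in hand, the argument closes immediately.
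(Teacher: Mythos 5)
There is a genuine gap at the very first step, and it is exactly the difficulty that the paper's proof is built to overcome. Writing (\ref{eqr}) for your $\psi_\e$ as $\intrp\lambda_t(dr)\psi_\e(r)+\mathrm{(loss)}=1+\mathrm{(gain)}$, with
\begin{equation*}
\mathrm{(gain)}=\intot ds\intrp\lambda_s(dr)\intpdp\beta(d\theta)\intpp\frac{d\alpha}{2\pi}(r^2+1-2r\cosa)^{\gamma/2}\,\psi_\e(r')\ \geq 0,
\end{equation*}
you cannot obtain your inequality ``$\leq 1$'' by discarding the gain: dropping a nonnegative term from the right-hand side yields ``$\geq 1$'', the wrong direction. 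Worse, the very splitting into separate gain and loss integrals is illegitimate here: (\ref{eqr}) holds for Lipschitz $\psi$ because the \emph{combined} integrand is controlled, $|\psi(r')-\psi(r)|\leq ||\psi||_{lip}(r^2+1-2r\cosa)^{1/2}$, so the kernel effectively has exponent $(\gamma+1)/2>-1/2$ in $\alpha$; but each piece separately is infinite at $r=1$ (the loss kernel $\intpdp\beta(d\theta)\intpp\frac{d\alpha}{2\pi}(2-2\cosa)^{\gamma/2}=+\infty$ since $\gamma<-1$, and the gain diverges equally because $r'(1,\theta,\alpha)\to1$ as $\alpha\to0$, so $\psi_\e(r')\geq 1/2$ near $\alpha=0$). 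To rescue the plan you would have to prove that the gain term tends to $0$ as $\e\downarrow0$; it does converge pointwise to $0$ (for $r>0$, $\theta\neq0$, $r'=1$ only on an $\alpha$-null set), but dominated convergence is circular: any natural dominating function, e.g. the kernel times $\indiq_{\{|r'^2-1|\leq\delta\}}$, is integrable against $ds\,\lambda_s(dr)\beta(d\theta)d\alpha$ only if $\int_0^t\lambda_s(\{1\})ds=0$, which is precisely what is to be proved. Your Fatou argument only handles the loss term (the easy, lower-bound direction) and offers nothing to oppose the divergence of the gain.

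The paper avoids this trap by testing with the band indicator $\indiq_{\{|r^2-1|\leq\e\}}\in\cF$, for which $\psi(r')-\psi(r)\neq0$ only when the jump crosses the boundary of the band; then the loss-type term $A_\e$ and the gain-type term $B_\e$ are each finite, and the balance can be made quantitative: $A_\e(1)\geq c_0\e^{\gamma+1}\to\infty$, while $B_\e(r)\leq c_1\e|r^2-1|^\gamma\indiq_{\{|r^2-1|>\e\}}+c_2$. Even then, the crude estimate of the $B_\e$ contribution is of the same order $\e^{\gamma+1}$ as $A_\e(1)$, which is why a second, nontrivial ingredient is needed: the de la Vallée Poussin Lemma \ref{lvp}, which shows $\e^{|\gamma|}\int_0^T ds\intrp\lambda_s(dr)|r^2-1|^\gamma\indiq_{\{|r^2-1|>\e\}}\to0$ (using $|\gamma|\geq1$), leading to (\ref{ob1}) and the conclusion. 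Your proposal, which reduces the lemma to the heuristic ``$I=\infty$'' of the formal Step 3, skips both of these essential ingredients and does not close.
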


\begin{proof} We divide the proof into two steps.
\vip
{\it Step 1.} We first show that for all $T>0$, there is
$\kappa_T>0$ such that for all $\e>0$ small enough,
\begin{equation}\label{ob1}
\int_0^T ds \lambda_s(\{1\}) \leq \kappa_T \e^{|\gamma|-1} + \kappa_T
\e^{|\gamma|}
\int_0^T ds \intrp \lambda_s(dr) |r^2-1|^\gamma \indiq_{\{|r^2-1| > \e\}}.
\end{equation}
We consider $\e \in (0,1)$, and we apply
(\ref{eqr}) with $\psi(r)=\indiq_{\{|r^2-1|\leq \e\}}$, which
belongs to $\cF$. We get
\begin{equation}\label{jab1}
\lambda_0(\{|r^2-1|\leq \e\}) - \lambda_T(\{|r^2-1|\leq \e\})
= \int_0^T ds \intrp \lambda_s(dr) [A_\e(r) - B_\e(r)],
\end{equation}
where, using $(A1-A2)$ and setting $\dtz=[-\pi/2,\pi/2] \backslash 
(-\theta_0,\theta_0)$,
\begin{eqnarray*}
A_\e(r)&=& \indiq_{\{|r^2-1|\leq \e\}} \int_\dtz \beta(d\theta) \intpp 
\frac{d\alpha}{2\pi} \indiq_{\{|r'^2-1|> \e\}} (r^2+1-2r\cosa)^{\gamma/2},
\ala
B_\e(r)&=& \indiq_{\{|r^2-1| > \e\}} \int_\dtz \beta(d\theta) \intpp 
\frac{d\alpha}{2\pi} \indiq_{\{|r'^2-1|\leq \e\}} (r^2+1-2r\cosa)^{\gamma/2}.
\end{eqnarray*}
We first give a lowerbound of $A_\e$. We only consider the case
where $r=1$.
Since $|(r'(1,\theta,\alpha))^2-1|=|\sint\sina|\geq (\sin{\theta_0})
\alpha/2$, for $\alpha \in (0,\pi/2)$, and since
$(2-2\cosa) \leq \alpha^2$,
we obtain
\begin{eqnarray}\label{jab2}
A_\e(r) \geq \indiq_{\{r=1\}} \frac{\beta(\dtz)}{2\pi}
\int_0^{\pi/2} d\alpha \indiq_{\{\alpha>\frac{2\e}{\sin\theta_0}\}} 
\alpha^{\gamma} \geq c_0  \indiq_{\{r=1\}} \e^{\gamma+1}
\end{eqnarray}
for some constant $c_0>0$, at least for $\e \in (0,\e_0)$
with $\e_0:=(\sin\theta_0)\pi/8$.

We now upperbound $B_\e$. Some easy considerations allow us to get
\begin{eqnarray*}
B_\e(r) \leq \frac{2}{\pi}\indiq_{\{|r^2-1| > \e\}} 
\int_{\theta_0}^{\pi/2} \!\! \beta(d\theta)
\int_{-\pi/2}^{\pi/2} \! \!d\alpha 
\indiq_{\{|r'^2-1|\leq \e\}} (r^2+1-2r\cosa)^{\gamma/2}.
\end{eqnarray*}
We of course have $(r^2+1-2r\cosa)^{\gamma/2} \leq |r-1|^\gamma$, and
a computation shows that $|r'^2-1|\leq \e$ implies that
$\sina \in [\frac{(1+\cost)(r^2-1) - 2\e}{2r\sint},\frac{(1+\cost)(r^2-1) 
+ 2\e}{2r\sint} ]$. This yields
\begin{eqnarray*}
B_\e(r) \leq \frac{2}{\pi}|r-1|^\gamma \indiq_{\{|r^2-1| > \e\}} 
\int_{\theta_0}^{\pi/2} \!\! \beta(d\theta)
\int_{-\pi/2}^{\pi/2} \! \!d\alpha 
\indiq_{\{\sina \in [\frac{(1+\cost)(r^2-1) \pm 2\e}{2r\sint}]\}}.
\end{eqnarray*}
Consider first $a_1>0$ such that $|r^2-1|\leq a_1$ implies $r\in[1/2,2]$
and $\frac{(1+\cost)(r^2-1)}{2r\sint} \in [-\pi/8,\pi/8]$ 
for all $\theta\in [\theta_0,\pi/2]$.

Consider $\e_1>0$ such $|r^2-1|\leq a_1$ implies
$[\frac{(1+\cost)(r^2-1) \pm 2\e_1}{2r\sint}] \subset [-\pi/4,\pi/4]$
for all $\theta\in [\theta_0,\pi/2]$.

Then for $|r^2-1|\leq a_1$, we get, for all $\e\in(0,\e_1)$, 
for some constants $\kappa_1,c_1>0$, 
(since then $2r\sin\theta\geq
\sint_0>0$ and $|r^2-1|=|r-1|(r+1)\leq 3 |r-1|$),
\begin{equation}\label{jab3}
B_\e(r) \leq \kappa_1 \e |r-1|^\gamma \indiq_{\{|r^2-1| > \e\}}
\leq c_1 \e |r^2-1|^\gamma \indiq_{\{|r^2-1| > \e\}}.
\end{equation}
On the other hand, it is immediate that for some
$c_2>0$, for $|r^2-1|\geq a_1$ (so that $|r-1|\geq a_2>0$),
\begin{equation}\label{jab4}
B_\e(r) \leq \frac{2}{\pi} \beta([\theta_0,\pi/2]) \pi |r-1|^{\gamma} 
\leq c_2.
\end{equation}
Using that $\lambda_0(\{|r^2-1|\leq \e\})-\lambda_T(\{|r^2-1|\leq \e\})\leq1$
and gathering 
(\ref{jab1}-\ref{jab2}-\ref{jab3}-\ref{jab4}), we obtain for 
all $\e \in (0,\e_2)$, with $\e_2 = \min(\e_0,\e_1)$,
\begin{eqnarray*}
1\geq \int_0^T ds \intrp\lambda_s(dr) \left[
c_0\indiq_{\{r=1\}} \e^{\gamma+1} 
-c_1 \e |r^2-1|^\gamma \indiq_{\{|r^2-1| > \e\}} - c_2
\right],
\end{eqnarray*}
whence (\ref{ob1}).
\vip
{\it Step 2.} We now conclude.
Consider the measure $\mu_T(dr)=\int_0^T ds \lambda_s(dr)\indiq_{\{r\ne 1\}}$.
Since $\mu_T$ is finite and $\mu_T(\{1\})=0$, 
the de la Vall\'ee Poussin Lemma 
\ref{lvp} ensures us that 
there exists a  
function $g: \rr_+ \mapsto \rr_+$, with $g(\infty)=\infty$,
such that $x \mapsto x g(1/x)$ is nondecreasing on $\rr_+$, and $\int_0^\infty
\mu_T(dr) g(1/|r^2-1|) < \infty$. Since $|\gamma| \geq 1$ by assumption,
we deduce that 
\begin{equation*}
\indiq_{\{|r^2-1|>\e\}}
\leq \frac{|r^2-1|^{|\gamma|} g(1/|r^2-1|)}{\e^{|\gamma|} g(1/\e)}, 
\end{equation*}
so that
(\ref{ob1}) becomes
\begin{eqnarray*}
\int_0^T ds \lambda_s(\{1\}) \leq \kappa_T \e^{|\gamma|-1} +
\frac{\kappa_T}{g(1/\e)} \int_0^\infty \mu_T(dr) g(1/|r^2-1|).
\end{eqnarray*}
Letting $\e$ tend to $0$, we get $\int_0^T ds \lambda_s( \{1\})=0$.
Since $T$ is arbitrarily large, this ends the proof.
\end{proof}

We may now conclude the

\vip

{\it Proof of Theorem \ref{main}.}
We consider the solution $(f_t)_{t\geq 0}$ built in Proposition \ref{exist},
and the associated radius distribution $(\lambda_t)_{t\geq 0}$.
Owing to the Radon-Nikodym Theorem, to Lemmas \ref{densp1} and \ref{p1}, 
we deduce that for a.e. $t\geq 0$,
$\lambda_t(dr)$ has a density $\lambda_t(r)$ with respect to the 
Lebesgue measure on $\rr_+$.
Then we deduce from (\ref{rdrdtheta}) 
that $f_t(dv)$ has the density 
$f_t(v)=\lambda_t(|v|)/(2\pi|v|)\indiq_{\{|v|\ne 0\}}$, 
and thus is indeed a function.
(The case $v=0$ is not a problem, since $f_t(\{0\})=\lambda_t(\{0\})=0$).
\qed

\vip

We conclude the section with the

\vip

{\it Proof of Proposition \ref{exist}.} We split the proof into 4 steps.
\vip
{\it Step 1.} We first check the existence of a solution.
We introduce, for $n\geq 1$, the operator $\cA_n$, of which the expression
is the same as (\ref{dfA}) with $\min(|v-v_*|^\gamma,n)$
instead of $|v-v_*|^\gamma$. Then we observe that with
our choice for $f_0$, as shown in the appendix, we have
for all $\varphi \in Lip(\rd)$,
\begin{eqnarray}
&\left|\int_\rd f_0(dv_*)\cA\varphi(v,v_*)\right| \leq  C(\Lambda,\gamma) 
||\varphi||_{lip}, \label{ms} \\
&v\mapsto \int_\rd f_0(dv_*)\cA\varphi(v,v_*)
\hbox{ is continuous on } \rd, \label{ms2} \\
&\sup_{n\geq 1} \left|\int_\rd f_0(dv_*)\cA_n\varphi(v,v_*)\right| 
\leq  C(\Lambda,\gamma) ||\varphi||_{lip}, \label{ms3}\\
& \lim_{n \to \infty} \sup_{v\in\rd}
\left| \int_\rd f_0(dv_*)(\cA-\cA_n)\varphi(v,v_*)\right| =0.\label{ms4}
\end{eqnarray}
In particular, (\ref{ms}) implies that all the terms make sense in (\ref{lbe}).

One easily checks, 
by classical methods (Gronwall Lemma and Picard iteration
using the total variation norm), that there exists a unique
solution $(f^n_t)_{t\geq 0}$ to $LB_n(f_0,\gamma,\beta)$,
where $\cA$ is replaced by $\cA_n$. The obtained solution $f^n$ 
is clearly radially symmetric.

Next, using (\ref{lbe}) and (\ref{ms3}), we deduce that 

(a) using $\varphi(v)=|v|$, 
$C_T:=\sup_n \sup_{[0,T]} \int_{\rd} f^n_t(dv)
|v| \leq 1+C(\Lambda,\gamma) T$ for all $T>0$,

(b) for any $\varphi\in Lip(\rd)$,
for all $0\leq s \leq t$, $\left|\int_{\rd} (f^n_t-f^n_s)(dv)\varphi(v) \right|
\leq C(\Lambda,\gamma) ||\varphi||_{lip} |t-s|$.

Point (a) ensures that for each $t\geq 0$, $(f^n_t)_{n\geq 1}$ is tight,
while (b) gives some equicontinuity estimates. It is then standard that
up to extraction of a 
(not relabelled) subsequence, $(f^n_t)_{t\geq 0}$ tends to some 
family of (radially symmetric) probability measures 
$(f_t)_{t\geq 0}$, 
in the sense that for all $\varphi \in Lip(\rd)$,
for all $T\geq 0$,  $\lim_n\sup_{[0,T]} | \int_{\rd} 
(f^n_t-f_t)(dv)\varphi(v)  | =0$. This also implies
that for all $t\geq 0$, all $\varphi:\rd\mapsto\rr$ 
continuous and bounded, $\lim_n \int_{\rd} 
f^n_t(dv)\varphi(v)= \int_{\rd} f_t(dv)\varphi(v)$.
We deduce that $(f_t)_{t\geq 0}$ solves $LB(f_0,\gamma,\beta)$, by 
passing to the limit in $LB_n(f_0,\gamma,\beta)$, using the 
convergence properties of $f^n$ to $f$, the Lebesgue dominated convergence
Theorem, as well as 
(\ref{ms}-\ref{ms2}-\ref{ms3}-\ref{ms4}).

\vip
{\it Step 2.} Next, point (ii) 
of the statement is a simple consequence of the
radial symmetry of $(f_t)_{t\geq 0}$.
\vip
{\it Step 3.} We now check point (iii) when 
$\psi \in Lip(\rr)$. Then $\varphi(v)=\psi(|v|)\in Lip(\rd)$, 
and we thus may apply (\ref{lbe}). Using
several times (\ref{rdrdtheta}) 
and the expression (\ref{cini}) of $f_0$, we get
\begin{eqnarray}\label{qq1}
\intrp \lambda_t(dr) \psi(r) = \intrd f_t(dv) \varphi(v) 
= \intrd f_0(dv) \varphi(v) + \intot ds \intrd f_s(dv) \intrd f_0(dv_*)
\cA\varphi(v,v_*)\ala
=  \psi(1) + \intot ds \intrp \lambda_s(dr) 
\intpp \frac{d\alpha_*}{2\pi}\intpp
\frac{d\alpha}{2\pi} \cA\varphi(r \be_\alpha,\be_{\alpha_*}).
\end{eqnarray}
Then a simple computation shows that 
\begin{eqnarray}\label{qq2}
\intpp \frac{d\alpha}{2\pi}
\cA\varphi(r \be_\alpha,\be_{\alpha_*})&=&\intpdp \beta(d\theta) 
\intpp \frac{d\alpha}{2\pi} 
|r\be_\alpha - \be_{\alpha_*}|^\gamma 
[\psi(|v'(r \be_\alpha,\be_{\alpha_*},\theta)|) - \psi(|r \be_\alpha|)]\ala
&\hskip-1.3cm =&\hskip-1cm \intpdp \beta(d\theta)  \intpp 
\frac{d\alpha}{2\pi}
(r^2+1-2r\cos(\alpha-\alpha_*))^{\gamma/2} 
[\psi(r'(r,\theta,\alpha-\alpha_*)
-\psi(r)]\ala
&\hskip-1.3cm =&\hskip-1cm\intpdp \beta(d\theta)  \intpp 
\frac{d\alpha}{2\pi} (r^2+1-2r\cosa)^{\gamma/2} 
[\psi(r'(r,\theta,\alpha)
-\psi(r)],
\end{eqnarray}
and in particular does not depend on $\alpha_*$. Gathering (\ref{qq1}) and 
(\ref{qq2}), we obtain (\ref{lbe}).
\vip
{\it Step 4.}
Finally, we have to prove that (\ref{lbe}) still holds when 
$\psi\in L^\infty([0,\infty))$, such that there exists $\e>0$ with 
$\psi=\psi\indiq_{\{|r^2-1|\geq 2\e\}}$.

To this end, we consider the finite Borel measure $\mu_t$ on $\rr_+$ defined by
$\mu_t(A):=\lambda_t(A) + \int_0^t ds  
[\lambda_s(A)+\int \beta(d\theta) \int_{-\pi}^\pi d\alpha 
\lambda_s(\{r'\in A\})] $.
We consider  $\psi_n \in Lip(\rr_+)$,
uniformly bounded by $2||\psi||_{\infty}$, 
satisfying $\psi_n=\psi_n\indiq_{\{|r^2-1|\geq \e\}}$
and such that
$\psi_n(r)$ tends to $\psi(r)$ for $\mu_t$-a.e. $r\in [0,\infty)$.
Such an approximating sequence can be found, due to the Lusin Theorem
(see e.g. Rudin \cite{rudin}).

Then we may apply (\ref{eqr}) for each $n\geq 1$, and get
\begin{eqnarray*}
\intrp \lambda_t(dr) \psi_n(r) = \intot ds \intrp \lambda_s(dr) 
\intpdp \beta(d\theta)
\intpp \frac{d\alpha}{2\pi} (r^2+1-2r\cosa)^{\gamma/2} 
[\psi_n(r')-\psi_n(r) ].
\end{eqnarray*}
To pass to the limit in this equation, we will use the Lebesgue
dominated
convergence Theorem. First, $\psi_n$ is uniformly bounded, so that the
left hand side is not a problem (recall that $\psi_n$ goes to $\psi$ 
$\mu_t$-a.e., and thus $\lambda_t$-a.e.). Next,
using the properties of $\psi_n$, we get
$|\psi_n(r')-\psi_n(r)| \leq  c
[\indiq_{\{|r'^2-1|\geq \e\}}+\indiq_{\{|r^2-1|\geq \e\}} ]$
and the proof will be finished 
if we show that
\begin{equation*}
O_\e(t)=\intot ds \intrp \lambda_s(dr) 
\intpdp \beta(d\theta)
\intpp \frac{d\alpha}{2\pi} (r^2+1-2r\cosa)^{\gamma/2}
(\indiq_{\{|r'^2-1|\geq \e\}}+\indiq_{\{|r^2-1|\geq \e\}})<\infty,
\end{equation*}
because since $\psi_n$ goes to $\psi$ $\mu_t$-a.e., 
$[\psi_n(r')-\psi_n(r)]$ goes to $[\psi(r')-\psi(r)]$, $ds\lambda_s(dr)
d\alpha\beta(d\theta)$-a.e.

To show that $O_\e(t)<\infty$,  we observe that $|r'^2-1|=
|(1+\cost)(r^2-1) - 2r\sint\sina|/2 \leq |r^2-1|+r|\sina|$.
Thus $|r'^2-1|\geq \e$ implies that 
either $|r^2-1|\geq \e/2$ or $r|\sina|\geq \e/2$.
Hence, recalling $(A1)$ and since $(r^2+1-2r\cosa)^{\gamma/2} \leq 
((r-1)^2+ 2r \alpha^2/5)^{\gamma/2}$, 
(because $1-\cosa \geq \alpha^2/5$ on $[-\pi,\pi]$),
\begin{equation*}
O_\e(t) \leq \Lambda t \sup_{r\geq 0, |\alpha|\leq \pi}
(2 \indiq_{\{|r^2-1|\geq \e/2\}}+ 
\indiq_{\{r|\sina|\geq \e/2\}})((r-1)^2+ 2r \alpha^2/5)^{\gamma/2} .
\end{equation*}
This last quantity is bounded for each $\e>0$ fixed 
(separate the cases $\{|r^2-1|\geq \e/2\}$, $\{r>2\}$, and 
$\{r\leq 2,
r|\sin\alpha|\geq \e/4\}\subset\{2r\alpha^2/5\geq \e^2/80\}$).
This concludes the proof.\qed

\section{Appendix}\label{ap}

We start with a result in  the spirit of de la Vall\'ee Poussin,  
adapted to our problem.

\begin{lem}\label{lvp}
Let $\mu$ be a nonegative finite measure on $\rr_+$
such that $\mu(\{1\})=0$. Then there exists a
function $g:\rr_+\mapsto \rr_+$
such that $\lim_{\infty} g=\infty$ and 
$\int_0^\infty \mu(dr)g(1/|r^2-1|)<\infty$. Furthermore, $g$ can be chosen 
in such a way that $x\mapsto xg(1/x)$ is non-decreasing on $\rr_+$.
\end{lem}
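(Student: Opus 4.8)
The plan is to adapt the classical de la Vallée Poussin argument, working with the pushforward of $\mu$ under the map $r\mapsto 1/|r^2-1|$. Define the finite Borel measure $\nu$ on $[0,\infty)$ by $\nu(B)=\mu(\{r\geq 0:\,1/|r^2-1|\in B\})$, with the convention that $r=1$ (where $|r^2-1|=0$) contributes to the value $+\infty$; since $\mu(\{1\})=0$ by hypothesis, $\nu$ is a genuine finite measure on $[0,\infty)$ with no mass "at infinity", and $\int_0^\infty \mu(dr)g(1/|r^2-1|)=\int_0^\infty \nu(dx)\,g(x)$ for any nonnegative measurable $g$. So it suffices to produce $g:\rp\to\rp$ with $\lim_\infty g=\infty$, with $\int_0^\infty g\,d\nu<\infty$, and with $x\mapsto xg(1/x)$ nondecreasing.

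Next I would run the standard construction. Since $\nu$ is finite, $\nu([n,\infty))\to 0$ as $n\to\infty$, so we may pick an increasing sequence of integers $0=n_0<n_1<n_2<\cdots$ with $\nu([n_k,\infty))\leq 2^{-k}$ for all $k\geq 1$. Define $g$ to be the piecewise-constant (or, to be safe, piecewise-linear and continuous) nondecreasing function with $g(x)=k$ for $x\in[n_k,n_{k+1})$; then $g(\infty)=\infty$, and
\begin{equation*}
\int_0^\infty g\,d\nu=\sum_{k\geq 0} k\,\nu\big([n_k,n_{k+1})\big)
\leq \sum_{k\geq 1} k\,\nu\big([n_k,\infty)\big)
\leq \sum_{k\geq 1} k\,2^{-k}<\infty,
\end{equation*}
which gives the integrability. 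This part is routine; the only real point requiring care is the extra monotonicity requirement on $h(x):=xg(1/x)$.

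The main obstacle is precisely ensuring $x\mapsto xg(1/x)$ is nondecreasing while keeping $g\to\infty$ and the integral finite. I would handle this as follows: first note that if $g$ is nondecreasing then $h(x)=xg(1/x)$ need not be monotone, so instead I would impose that $g$ grows \emph{slowly}, e.g. replace the $g$ above by $\tilde g(x)=\min(g(x),\,\log(e+x))$ or more simply thin out the sequence $(n_k)$ further so that $g$ increases by at most $1$ only at points $n_k$ with $n_{k+1}\geq 2 n_k$; then for $x\leq y$ one checks $g(1/y)\geq g(1/x)-1$ is not quite enough, so the clean fix is: take $g$ nondecreasing and \emph{concave} on $\rp$ with $g(0)=0$ — any concave nondecreasing $g$ makes $h(x)=xg(1/x)$ automatically nondecreasing (since $h'(x)=g(1/x)-\frac1x g'(1/x)\geq 0$ by concavity, as $g(u)\geq u g'(u)$ for concave $g$ with $g(0)\geq0$). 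One can always choose the $g$ from the de la Vallée Poussin construction to be concave: replace it by its concave envelope, which is still nondecreasing, still tends to $\infty$ (it dominates the original $g$), hence... wait, that increases $g$, so instead interpolate the points $(n_k,k)$ by a concave nondecreasing piecewise-linear function, which lies \emph{below} the step function, still tends to infinity provided $n_k\to\infty$ not too fast — e.g. choosing $n_k$ so that the slopes $1/(n_{k+1}-n_k)$ are nonincreasing makes the natural piecewise-linear interpolant concave. Then $\int g\,d\nu$ is still bounded by the same series, $g(\infty)=\infty$, and concavity delivers the monotonicity of $xg(1/x)$. Translating back through $\nu\rightsquigarrow\mu$ finishes the proof. \qed
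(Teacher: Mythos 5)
Your proof is essentially correct, but it fixes the delicate monotonicity requirement by a genuinely different device than the paper. Both arguments start the same way (a de la Vall\'ee Poussin--type choice of levels, using $\mu(\{1\})=0$ and finiteness of $\mu$ to get $\nu([n_k,\infty))\leq 2^{-k}$ for the pushforward $\nu$ of $\mu$ under $r\mapsto 1/|r^2-1|$; the paper works directly with $\mu(\{|r^2-1|\leq 1/a_k\})\leq 2^{-k}$, which is the same thing). Where you diverge is the second step: the paper keeps the step function $f$ and replaces it by $g(x):=x\inf_{[0,x]}(f(y)/y)\leq f(x)$, which makes $xg(1/x)=\inf_{[0,1/x]}f(y)/y$ nondecreasing by construction, at the price of a small separate argument (splitting $[0,x]$ into $[0,\e x]$ and $[\e x,x]$ and using $f\geq 1$) to show that $g$ still tends to infinity. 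You instead thin the sequence $(n_k)$ so that the gaps are nondecreasing and take the concave piecewise-linear interpolant of $(n_k,k)$ with $g(0)=0$; concavity plus $g(0)=0$ gives that $g(u)/u$ is nonincreasing, hence $xg(1/x)$ is nondecreasing, and $g\to\infty$ is immediate. This buys a cleaner verification of the two qualitative properties, while the paper's trick avoids having to revisit the choice of the sequence. Two small repairs to your write-up: the interpolant does \emph{not} lie below the step function (on $[n_k,n_{k+1})$ it takes values in $[k,k+1)$, i.e.\ it exceeds the step value by at most $1$), so the integral bound should read $\int g\,d\nu\leq \nu(\rr_+)+\sum_{k\geq 1}(k+1)2^{-k}<\infty$ rather than ``the same series''; and you should state explicitly that the thinning is always possible (enlarging $n_k$ only decreases $\nu([n_k,\infty))$, so one may impose, say, $n_{k+1}-n_k\geq n_k-n_{k-1}$). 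Also remove the exploratory hedging (``wait, that increases $g$\dots'') from the final text; the concave-envelope detour is indeed wrong for the reason you noticed, and only the interpolation construction should remain.
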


\begin{proof}
Since $\mu$ is finite and since $\mu(\{1\})=0$, we may find an 
increasing sequence $(a_k)_{k\geq 1}\subset (0,\infty)$, with 
$\mu(\{|r^2-1|\leq 1/a_k\}) \leq 2^{-k}$. We also set $a_0=0$, and
define the non-decreasing function $f:\rr_+\mapsto [1,\infty)$ by
$f(x)=k+1$ if $x\in [a_k,a_{k+1})$. Then
$\lim_{\infty} f = \infty$, and
\begin{equation*}
\int_0^\infty \mu(dr) f(1/|r^2-1|) = \sum_{k\geq 0}
(k+1) \mu(\{|r^2-1| \in (1/a_{k+1},1/a_k]\})
\leq \mu(\rr_+)+\sum_{k\geq 1} (k+1)2^{-k}<\infty.
\end{equation*}
We now set $g(x):=x \inf_{[0,x]} (f(y)/y)\leq f(x)$.
Hence $\int_0^\infty \mu(dr)g(1/|r^2-1|)<\infty$.
Moreover, $xg(1/x)=\inf_{[0,1/x]} \frac{f(y)}{y}$ is clearly 
non-decreasing. We finally have to check that $\lim_\infty g =\infty$.
But for each $\e>0$, for all $x\geq 0$, 
$$
g(x) \geq \min(x\inf_{[0,\e x]} f(y)/y,
x\inf_{[\e x,x]} f(y)/y) \geq \min(1/\e,\inf_{[\e x,x]}f(y))
$$
since $f\geq 1$. Using that $\lim_\infty f = \infty$,
we obtain $\liminf_\infty g \geq 1/\e$. This holds for all $\e>0$,
and thus allows us to conclude.
\end{proof}

Before proving (\ref{ms}-\ref{ms2}-\ref{ms3}-\ref{ms4}), we observe that
for $\be_\alpha=(\cos \alpha, \sin \alpha)$, the function
\begin{equation*}%
h_\delta(v):=\frac{1}{2\pi}\int_{-\pi}^\pi d\alpha |v-\be_\alpha|^\delta
\end{equation*}
is bounded on $\rd$ if $\delta\in (-1,0)$.

\vip

{\it Proof of (\ref{ms}).} We first observe, recalling 
(\ref{dfvprime}), that $|v'-v| \leq  |v-v_*|$.
Thus, due to $(A1)$, (\ref{dfA}) and (\ref{cini}),
we deduce that 
$|\int_\rd f_0(dv_*) \cA \varphi(v,v_*) |\leq \Lambda ||\varphi||_{lip}
\int_\rd f_0(dv_*)|v-v_*|^{\gamma+1}=h_{\gamma+1}(v)$, which is bounded
since $\gamma+1 \in (-1,0)$ by assumption.
\qed

\vip

{\it Proof of (\ref{ms3}).} It is the same as that of (\ref{ms}).
\qed

\vip

{\it Proof of (\ref{ms4}).} Using the same arguments as in the proof
of (\ref{ms}), 
we get 
\begin{eqnarray*}
|\intrd f_0(dv_*) (\cA-\cA_n) \varphi(v,v_*)| \leq
\Lambda ||\varphi||_{lip}
\intrd f_0(dv_*) |v-v_*|^{\gamma+1}\indiq_{\{|v-v_*|^\gamma \geq n\}}\ala
\leq \Lambda ||\varphi||_{lip}\intrd f_0(dv_*) 
|v-v_*|^{\gamma/2} |v-v_*|^{1+\gamma/2}\indiq_{\{|v-v_*| \leq n^{1/\gamma}\}}
\ala
\leq \Lambda ||\varphi||_{lip} n^{(2+\gamma)/2\gamma} \intrd f_0(dv_*)
|v-v_*|^{\gamma/2}=\Lambda ||\varphi||_{lip} n^{(2+\gamma)/2\gamma}
h_{\gamma/2}(v).
\end{eqnarray*}
We used here that $1+\gamma/2>0$. Since $h_{\gamma/2}$ is bounded
(because $\gamma/2\in(-1,0)$), and since 
$ (2+\gamma)/2\gamma <0$, the result follows.
\qed

\vip

{\it Proof of (\ref{ms2}).} For $\varphi\in Lip(\rd)$, 
we set $h_\varphi(v)=\int_\rd f_0(dv_*) \cA\varphi(v,v_*)$.
We wish to show that $h_\varphi$ is continuous on $\rd$. This
follows from (\ref{ms4}). Indeed, consider $h_\varphi^n$, where
$\cA$ is replaced by $\cA_n$. Then for each $n\geq 1$, 
$h_\varphi^n$ is obviously
continuous on $\rd$, by the Lebesgue Theorem
(because for all $v_*,\theta$, $v\mapsto \min(|v-v_*|^\gamma,n)$ and
$v\mapsto v'(v,v_*,\theta)$ are continuous). But (\ref{ms4}) implies
that $h_\varphi^n$ goes uniformly to $h_\varphi$ on $\rd$.
\qed

\vip

{\bf Acknowledgements} I wish to thank Jacques Printems for
stimulating discussions.

\def\refname{References}

\end{document}